\newif\ifstandardtemplate \standardtemplatetrue
\newif\ifelseviertemplate \elseviertemplatefalse
\newif\ifspringertemplate \springertemplatefalse
\newif\ifwileytemplate    \wileytemplatefalse
\newcommand{\mytitle}{Connecting beams and continua: variational basis and mathematical analysis}
\newcommand{\myabstract}{We present a new variational principle for 
  linking models of beams and deformable solids, providing also its
  mathematical analysis.
  Despite the apparent differences between the two types of
  governing equations, it will be shown that
  the equilibrium of systems combining beams
  and solids can be obtained from a joint constrained variational principle
  and that the resulting boundary-value problem is well posed.}
\newcommand{\myack}{Partial support for this work has been provided
  by project DPI2017-92526-EXP from the Spanish Ministry of Science,
  Innovation, and Universities.}
\title{\mytitle
  \thanks{\myack}}
\author{Ignacio Romero}
\journalname{Journal of Elasticity}
\institute{I. Romero \at
  Universidad Polit\'ecnica de Madrid, Jos\'e Guti\'errez Abascal, 2, Madrid 29006, Spain\\
  \email{ignacio.romero@upm.es}
  \and
  IMDEA Materials Institute, Eric Kandel 2, Tecnogetafe, Madrid 28906, Spain\\
  \email{ignacio.romero@imdea.org}
  \and
  Orcid: 0000-0003-0364-6969}
\titlerunning{\mytitle}
\authorrunning{I. Romero}
\date{Received: date / Accepted: date}
\begin{document}
\maketitle

\begin{abstract}
  \myabstract
  \keywords{Variational principles \and beams \and solids \and well-posedness.}
  \subclass{74G25 \and 74G30 \and 74G65} 
\end{abstract}

% some options
\smartqed

\fi

%--------------------------------------------------------------------------------------------------
%
%          Header for Elsevier journals
%
%--------------------------------------------------------------------------------------------------
\ifelseviertemplate
\begin{frontmatter}

\title{\mytitle}

\author[1,2]{Ignacio Romero\corref{cor1}}
\ead{ignacio.romero@upm.es}

\address[1]{Universidad Polit\'ecnica de Madrid, Spain}
\address[2]{IMDEA Materials Institute, Spain}
\address[3]{xx}

\cortext[cor1]{Corresponding author. ETS Ingenieros Industriales,
  Jos\'e Guti\'errez Abascal, 2, Madrid 28006, Spain}
\runninghead{Romero and Olleros}

\begin{abstract}
\myabstract
\end{abstract}
\begin{keyword}
  fe \sep fe.
\end{keyword}
\end{frontmatter}

\fi

%--------------------------------------------------------------------------------------------------
%
%          Header for Wiley journals
%
%--------------------------------------------------------------------------------------------------
\ifwileytemplate
\documentclass[doublespace,times]{nmeauth}
\usepackage[latin1]{inputenc}
\usepackage[final]{changes}
\usepackage[latin1]{inputenc}
\usepackage{amssymb}
\usepackage{amsmath}
\usepackage{amsthm}
\usepackage{graphicx}
\usepackage{enumitem}
\usepackage{natbib}
\usepackage{xfrac}%defines sfrac
\usepackage{todonotes}
\usepackage{siunitx}
\graphicspath{{Figures/}{figures/}}
\message{Compiling with Wiley template}

\begin{document}
\runningheads{I. Romero}{...}

% Title, authors and addresses
\title{\mytitle}
\author{Ignacio Romero\affil{1}\affil{2}\corrauth}

\address{\affilnum{1} ETSII, Universidad Politécnica de Madrid, 
         Jos\'{e} Guti\'{e}rrez Abascal, 2, 28006 Madrid, Spain\break
         \affilnum{2} IMDEA Materials Institute, Eric Kandel 2, 28096 Getafe, Madrid, Spain}

\corraddr{Dpto. de Ingenier\'{\i}a Mec\'{a}nica; E.T.S. Ingenieros Industriales; 
José Gutiérrez Abascal, 2; 28006 Madrid; Spain. Fax (+34) 91 336 3004}

\begin{abstract} 
\end{abstract}

\keywords{.}
\maketitle
\fi

%--------------------------------------------------------------------------------------------------
%
%          Default header for articles, as provided by LaTeX
%
%--------------------------------------------------------------------------------------------------
\ifstandardtemplate
\documentclass[10pt,a4paper]{article}
\usepackage{amssymb}
\usepackage{amsmath}
\usepackage{amsthm}
\usepackage{graphicx}
\usepackage{enumitem}
\usepackage{xfrac}%defines sfrac
\usepackage{todonotes}
\usepackage{siunitx}
\graphicspath{{Figures/}{figures/}}

%-------------
% extra packages
%-------------
\message{Compiling with default template}

\title{\mytitle}
\author{I. Romero$^1$}
\date{$^1$Universidad Polit\'ecnica de Madrid, 
  Jos\'{e} Guti\'{e}rrez Abascal, 2, 28006 Madrid, Spain\\[2ex]%
  $^2$ IMDEA Materials Institute, Eric Kandel,2; 28906 Madrid, Spain\\[2ex]
  $^3$Organization \\[2ex]%
    \today}

\newtheorem{theorem}    {Theorem}[section]
\newtheorem{lemma}      [theorem]{Lemma}
\newtheorem{corollary}  [theorem]{Corollary}
\newtheorem{proposition}[theorem]{Proposition}
\newtheorem{algorithm}  [theorem]{Algorithm}

\theoremstyle{definition}
\newtheorem{definition} [theorem]{Definition}
\newtheorem{problem}    [theorem]{Problem}

\newtheorem{examplex}[theorem]{$\triangleright\;$Ejemplo}
\newenvironment{example}{\medskip\begin{examplex}}{\hfill$\triangleleft$\end{examplex}}

\theoremstyle{remark}
\newtheorem{remark}{Remark}
\newtheorem{remarks}{Remarks}
\newtheorem{note}{Note}

\begin{document}
\maketitle
\fi

\newcommand{\mbs}[1]{\boldsymbol{#1}}
\newcommand{\concept}[1]{\textbf{\emph{#1}}}
\newcommand{\defined}{:=}
\newcommand{\dev}{{\mathop{\mathrm{dev}}}}
\renewcommand{\div}{{\mathop{\mathrm{div}}}}
\newcommand{\pairing}[2]{\langle{#1},{#2}\rangle}
\newcommand{\dd}[2]{\frac{\mathrm{d} #1}{\mathrm{d} #2}}
\newcommand{\pd}[2]{\frac{\partial #1}{\partial #2}}
\newcommand{\fd}[2]{\frac{\delta #1}{\delta #2}}
\newcommand{\set}[1]{\left\{#1\right\}}
\newcommand{\trace}{{\mathop{\mathrm{tr}}}}
\newcommand{\norm}[1]{\|#1\|}

\let\oldGamma=\Gamma \renewcommand{\Gamma}{\mathit{\oldGamma}}
\let\oldLambda=\Lambda \renewcommand{\Lambda}{\mathit{\oldLambda}}
\let\oldOmega=\Omega \renewcommand{\Omega}{\mathit{\oldOmega}}
\newcommand{\bigO}{\mathcal{O}}

\newcommand{\normbeam}[1]{\;\norm{#1}_{\mathcal{W}\times\mathcal{R}}}
\newcommand{\norma}[1]{\;|\kern-1pt\norm{#1}\kern-1pt|}
\newcommand{\normu}[1]{\;\norm{#1}_{\mathcal{U}}}
\newcommand{\normv}[1]{\;\norm{#1}_{\mathcal{V}}}
\newcommand{\normq}[1]{\;\norm{#1}_{\mathcal{Q}}}
\newcommand{\uptohere}{\centerline{\textcolor{blue}{\rule{6cm}{0.2cm}}}}

%--------------------------------------------------------------------------------------------------
%
%                                        The document
%
%--------------------------------------------------------------------------------------------------

\section{Introduction}
\label{sec-intro}

The problems of beams and deformable solids refer both to the mechanical response of bodies when
subjected to the external actions, including forces, torques, and imposed displacements.  However,
from the mathematical viewpoint, these two problems are intrinsically very different. Even when
restricted to small strains, the kinematics of these two types of bodies are disparate: whereas the
former is described by a displacement field on an open set of two or three-dimensional Euclidean
space, the latter depends on the displacement and the rotation on an interval of the real line. The
equilibrium equations of a deformable solid, moreover, are \emph{partial} differential equations, in
contrast with the \emph{ordinary} differential equations that describe the equilibrium of forces and
momenta in a beam.

Despite the apparent differences between the mathematical description of the mechanics of beams and
deformable solids, there are deep relations between  them. After all, beams are nothing but a
special class of solids whose equations can be obtained from the equations of solid mechanics by
exploiting some asymptotic behavior or by constraining the class of admissible kinematics (see, for
example, \cite{Green:rodI:1974,Green:rodII:1974} for a description of these two avenues for model
reduction).

One specific aspect that is of both theoretical and practical interest is the combination of the
equations of beams and solids within a single mechanical system or structure. From the theoretical
point of view, the interest lies in the formulation of links between these two types of equations
and the well-posedness of the resulting boundary-value problems. From the practical side, joint
beam/solid equations lead to numerical methods that can efficiently represent the behavior of (beam)
structures with subsets studied as three-dimensional solids.

Recently, the author has presented novel formulations of coupled beam/solid mechanics that lead to
numerical methods, both in the linear and nonlinear regimes~\cite{Romero:2018iu}.  These
formulations, based on new variational principles, can be easily discretized using, for example,
finite elements, and replace commonly employed \emph{ad hoc} links between beams and solids (e.g.,
\cite{Surana:1980uy,Surana:1980uo,Gmur:1993tk}). The latter, often based on constraints on the
discrete solution, lack a variational basis and thus neither their well-posedness nor their
stability can be ascertained.

In this article we study  boundary-value problems of linked, deformable, beams and solids in the
context of linearized elasticity, as defined by a constrained variational principle.  The main goal
is to prove the well-posedness of problems with beams and solids involving the minimum set of
boundary conditions, effectively proving that the linking terms provide the right stability to the
equations, precluding rigid body motions of the system. The boundary-value problems that will be
studied have the structure of saddle-point optimization programs in Hilbert spaces (e.g.,
\cite{Boffi:2013jt}) and standard analysis techniques can be used to study their stability and
well-posedness.

In section~\ref{sec-statement} we summarize the equations that govern deformable solids and beams in
the context of small strain kinematics, highlighting the variational statement of these two problems
and their essential mathematical properties. Section~\ref{sec-joint} formulates the simplest problem
consisting of a beam and a solid that share an interface with the minimum set of Dirichlet boundary
conditions. A joint variational principle, where the kinematic compatibility is introduced with
Lagrange multipliers, is presented as well. The well-posedness of the resulting boundary-value
problem is analyzed in Section~\ref{sec-analysis}. The article concludes with a
summary of the main results in Section~\ref{sec-summary}.

\section{Problem statement}
\label{sec-statement}
This article analyses boundary value problems of joint
continuum solids and beams whose solutions correspond to the mechanical equilibrium
of both types of bodies, as well as certain compatibility relations in their shared
interfaces. Before formulating the global problem, the governing equations
of elasticity and beams are briefly reviewed, and their main mathematical
properties are identified.

The choice of boundary conditions in these problems is crucial. To show that
the constraints that are later introduced effectively link beams and solids, we
will present the pure traction problem of an elastic solid and a mixed traction-displacement
problem of a beam. Later, we will prove that these two bodies, when
appropriately connected, result in a stable structure.

\subsection{The Neumann problem of small strain, elastic solids}
\label{subs-solid}
We start by describing the continuum solid, and we restrict our presentation
to an elastic  one that occupies a bounded open set $\mathcal{B}\subset\mathbb{R}^3$
with volume $|\mathcal{B}|$.
The boundary of the solid is denoted $\partial\mathcal{B}$ and we identify a
subset $\Sigma\subsetneq\partial\mathcal{B}$ that will later be linked to a beam.

In classical elasticity, the unknown is the displacement  $\mbs{u}\in \mathcal{U} :=
\left[H^1(\mathcal{B})\right]^3$, the Hilbert space of vectors fields with (Lebesgue)
square-integrable components and square-integrable (weak) first derivatives. The stored energy of
the deformable body is given by a  scalar function $W=\hat{W}( \mbs{\varepsilon})$, where
$\mbs{\varepsilon}= \nabla^s \mbs{u} := \frac{1}{2}(\nabla \mbs{u} + \nabla^T \mbs{u})$ is the
infinitesimal strain tensor and $\nabla$ is the gradient operator.  More specifically, for linear
isotropic materials this function takes the form  $\hat{W}(\mbs{\varepsilon}) = \mu
\mbs{\varepsilon}\cdot \mbs{\varepsilon} +\frac{\lambda}{2} \trace[\mbs{\varepsilon}]^2$ where
$\lambda,\mu$ are the two Lam\'e constants, the dot product refers to the complete index
contraction, and $\trace[\cdot]$ is the trace operator.

Considering that the body might be subject to body forces $\mbs{f}\in
[H^{-1}(\mathcal{B})]^3$ and surface tractions $\mbs{t}$ on $\partial\mathcal{B}\setminus \Sigma$,
the total potential energy of the body is 
\begin{equation}
  \Pi_{\mathcal{B}} (\mbs{u})
  \defined
  \frac{1}{2} a_{\mathcal{B}}( \mbs{u}, \mbs{u}) - f_{\mathcal{B}}(\mbs{u})\ ,
   \label{eq-potential-body}
\end{equation}
with 
\begin{subequations}
  \begin{align}    
  a_{\mathcal{B}}( \mbs{u}, \mbs{v})
    &\defined
      \int_{\mathcal{B}}
      \left( 2\mu \nabla^s \mbs{u}\cdot\nabla^s\mbs{v} +
      \lambda (\nabla\cdot\mbs{u})\,( \nabla\cdot \mbs{v})
      \right)
      \,\mathrm{d} V\ ,
      \label{eq-solid-a}
    \\
    f_{\mathcal{B}}(\mbs{u})
  &\defined
    \int_{\mathcal{B}} \mbs{f}\cdot \mbs{u} \,\mathrm{d} V
    +
    \int_{\partial\mathcal{B}\setminus\Sigma} \mbs{t}\cdot \mbs{u} \,\mathrm{d} A ,
    \label{eq-solid-f}
  \end{align}
\end{subequations}
for all $\mbs{u},\mbs{v}\in \mathcal{U}$. We note, in passing, that the
potential energy~\eqref{eq-potential-body} might not have any minimiser in $\mathcal{U}$ 
--- if the forces are not equilibrated --- or alternatively, have an infinite
number of them, since the displacement function has no imposed values
at the boundary \cite{Gurtin:lin72}. If studied by itself, the minimization of the
potential~\eqref{eq-potential-body} corresponds to the Neumann problem
of elasticity and the right functional analysis setting corresponds to the
quotient space of $\mathcal{U}$ modulo the set of infinitesimal rigid
body motions.

To set up the  analysis framework for the study
of three-dimensional solids, we first recall the norm
on the space $\mathcal{U}$ which has the standard form
\begin{equation}
  \norm{\mbs{u}}_{\mathcal{U}} 
  \defined
  \left(
    \norm{\mbs{u}}^2_{[L^2(\Omega)]^3} 
    +
    L^2
    \norm{\nabla\mbs{u}}^2_{[L^2(\Omega)]^3} 
  \right)^{1/2}
  \ .
  \label{eq-unorm}
\end{equation}
The bilinear form \eqref{eq-solid-a} verifies the following continuity
and stability bounds
\begin{subequations}\label{solid-bounds}
  \begin{align}
    |a_{\mathcal{B}}(\mbs{u},\mbs{v})|
    &\le 
    C_{\mathcal{B}} \normu{\mbs{u}} \normu{\mbs{v}},
      \label{eq-solid-bounds-1}
    \\
    a_{\mathcal{B}}(\mbs{u},\mbs{u}) + \norm{\mbs{u}}^2_{[L^2(\mathcal{B})]^3}
    &\ge
    \alpha_{\mathcal{B}} \normu{\mbs{u}}^2\ ,
      \label{eq-solid-bounds-2}
  \end{align}
\end{subequations}
for some positive constants $C_{\mathcal{B}},\alpha_{\mathcal{B}}$, and all
$\mbs{u},\mbs{v}\in\mathcal{U}$.  It bears emphasis that, due to the lack of Dirichlet boundary
conditions on the boundary of the body, the bilinear form $a_{\mathcal{B}}(\cdot,\cdot)$ is not
coercive in $\mathcal{U}$. Rather,  and based on Korn's second inequality \cite{Marsden:1983ty},
only the weaker statement~\eqref{eq-solid-bounds-2} can be made. Also, the linear form
$f_{\mathcal{B}}$ is assumed to be continuous, i.e.,
\begin{equation}
    f_{\mathcal{B}}(\mbs{u}) 
    \le
    c_{\mathcal{B}} \normu{\mbs{u}}\ ,
   \label{eq-solid-fbound}
\end{equation}
with $c_{\mathcal{B}}>0$ for all $\mbs{u}\in\mathcal{U}$.

\subsection{Beam mechanics}
\label{subs-beam}

A cantilever beam of length $L$ is now studied. Its curve of centroids is described by a known
smooth curve $\mbs{r}:[0,L]\to\mathbb{R}^3$, with a cross section attached to each point of the
curve and oriented according to a known smooth rotation field $\mbs{\Lambda}:[0,L]\to SO(3)$, the
latter referring to the set of proper orthogonal tensors. The points on $\mbs{r}$ and sections
$\mbs{\Lambda}$ are parameterized by the arclength $s\in[0,L]$ and we choose $s=0$ and $s=L$ to
correspond, respectively, to the clamped section and free tip.

Let $\{\mbs{e}_1,\mbs{e}_2,\mbs{e}_3\}$ be a Cartesian basis. Then $\mbs{\Lambda}(s)$ maps
$\mbs{e}_3$ to the unit tangent vector to curve of centroids at the point $\mbs{r}(s)$, and
$\{\mbs{e}_1,\mbs{e}_2\}$ to the directions of the principal axis of the cross section at the same
point.  The displacement of the centroids will be given by the vector field $\mbs{w}\in \mathcal{W}
:= [H^1_0(0,L)]^3$ and the incremental rotation vector of the cross sections as
$\mbs{\theta}\in\mathcal{R} := [H^1_0(0,L)]^3$.  Following our previous notation, $[H_0^1(0,L)]^3$
refers to the Hilbert space of vectors fields on $(0,L)$ with vanishing trace at $s=0$.

Shear deformable, three-dimensional beams employ two deformation measures,
namely,
\begin{equation}
  \begin{aligned}
    \mbs{\Gamma} &= 
    \hat{\mbs{\Gamma}}(\mbs{w},\mbs{\theta})
    \defined
    \mbs{\Lambda}^T (\mbs{u}' + \mbs{\theta}\times \mbs{r}')\ ,
    \\
    \mbs{\Omega} &= 
    \widehat{\mbs{\Omega}}(\mbs{\theta})
    \defined
    \mbs{\Lambda}^T \mbs{\theta}'\ , \\ 
  \end{aligned}
   \label{eq-def-timoshenko}
\end{equation}
where the prime symbol denotes the derivative with respect to the arc-length.
The strain $\mbs{\Gamma}$ holds the shear and axial deformations,
whereas the vector  $\mbs{\Omega}$ contains the bending curvatures and the torsion deformation.

The simplest \emph{section} constitutive law for a beam of a linear elastic
and isotropic material with Young's and shear moduli~$E,G$, respectively,
is based on a quadratic stored energy function per unit length.
It has the form 
\begin{equation}
  U(\mbs{\Gamma},\mbs{\Omega})
  \defined
  \frac{1}{2} \mbs{\Gamma}\cdot \mbs{C}_{\mbs{\Gamma}} \mbs{\Gamma}
  +
  \frac{1}{2} \mbs{\Omega}\cdot \mbs{C}_{\mbs{\Omega}} \mbs{\Omega}
  \label{eq-stored}
\end{equation}
with section stiffness $\mbs{C}_\Gamma = \mathrm{diag}[GA_1,GA_2,EA]$ and
$\mbs{C}_\Omega = \mathrm{diag}[EI_1,EI_2,GI_t]$, where $A$ is the cross
section area, $A_1,A_2$ are the (shear) reduced sections areas in the
two principal directions, $I_1,I_2$ are the two principal moments
of inertia, and $I_t$ is the torsional inertia.
When the beam is under distributed loads and moments, denoted
respectively as $\bar{\mbs{n}}$ and $\bar{\mbs{m}}$,
and subject to a concentrated load $\bar{\mbs{P}}$ and
moment $\bar{\mbs{Q}}$ at the tip, its total  potential energy can be expressed as
\begin{equation}
  \Pi_b( \mbs{w}, \mbs{\theta})
  \defined
  \frac{1}{2} a_b(\mbs{w}, \mbs{\theta}; \mbs{w}, \mbs{\theta}) 
  -
  f_b(\mbs{w},\mbs{\theta})\ ,
   \label{eq-pot-beam}
\end{equation}
with $(\mbs{w},\mbs{\theta})\in\mathcal{W}\times\mathcal{R}$ and
\begin{equation}
  \begin{aligned}
    a_b(\mbs{w}, \mbs{\theta}; \mbs{t}, \mbs{\beta})
    &\defined
    \int_0^L 
    \left(
      \hat{\mbs{\Gamma}}(\mbs{w},\mbs{\theta})\cdot \mbs{C}_{\Gamma}
      \hat{\mbs{\Gamma}}(\mbs{t},\mbs{\beta}) 
      +
      \hat{\mbs{\Omega}}(\mbs{\theta})\cdot \mbs{C}_{\Omega}
      \hat{\mbs{\Omega}}(\mbs{\beta})
    \right)
    \,\mathrm{d} S ,
    \\
    f_b(\mbs{t},\mbs{\beta})
    &\defined
    \int_0^L 
    \left(
      \bar{\mbs{n}}\cdot \mbs{t} 
      +
      \bar{\mbs{m}}\cdot \mbs{\beta}
    \right)
    \,\mathrm{d} S
    + \bar{\mbs{P}}\cdot \mbs{u}_*
    +\bar{\mbs{Q}}\cdot \mbs{\theta}_*
    \ ,
  \end{aligned}
   \label{eq-forms-beam}
\end{equation}
for all $(\mbs{t},\mbs{\beta})\in \mathcal{W}\times\mathcal{R}$,
and $\mbs{u}_*:=\mbs{u}(L),\;\mbs{\theta}_*=\mbs{\theta}(L)$. 

To set up the functional setting for the beam problem, we recall
the norms on the space of displacements and rotations which are
\begin{equation}
  \begin{split}
    \norm{\mbs{w}}_{\mathcal{W}}
    &\defined
    \left(
      \norm{\mbs{w}}^2_{[L^2(0,L)]^3} 
      + L^2
      \norm{\mbs{w}'}^2_{[L^2(0,L)]^3} 
    \right)^{1/2}
    \ , \\
    \norm{\mbs{\theta}}_{\mathcal{R}} 
    &\defined
    \left(
      \norm{\mbs{\theta}}^2_{[L^2(0,L)]^3}
      +  L^2
      \norm{\mbs{\theta}'}^2_{[L^2(0,L)]^3}
    \right)^{1/2}
  \ .
   \end{split}
   \label{eq-norms}
\end{equation}
Also, the product space $\mathcal{W}\times\mathcal{R}$, the natural
setting for the beam problem, has the product norm
\begin{equation}
  \normbeam{(\mbs{w},\mbs{\theta})}
  =
  \left(
    \norm{\mbs{w}}^2_{\mathcal{W}}
    +
    L^2
    \norm{\mbs{\theta}}^2_{\mathcal{R}}
  \right)^{1/2}.
   \label{eq-product-norm}
\end{equation}
The bilinear form~\eqref{eq-forms-beam} verifies the 
continuity and stability bounds
\begin{equation}
  \begin{aligned}
    |a_b(\mbs{w},\mbs{\theta}; \mbs{t}, \mbs{\beta})|
    &\le
    C_{b} \normbeam{(\mbs{w},\mbs{\theta})} \normbeam{(\mbs{t},\mbs{\beta})}\ ,
    \\
    a_b(\mbs{w},\mbs{\theta}; \mbs{w}, \mbs{\theta})
    &\ge
    \alpha_b \normbeam{(\mbs{w},\mbs{\theta})}^2\ ,
  \end{aligned}
   \label{eq-beam-bounds}
\end{equation} 
for some constants $C_b,\alpha_b>0$ and all
$(\mbs{w},\mbs{\theta}),(\mbs{t},\mbs{\beta})\in\mathcal{W}\times\mathcal{R}$.  In contrast with the
bilinear form of the solid, and precisely due to the boundary conditions on the beam,
the bilinear form $a_b(\cdot,\cdot)$ is coercive in $\mathcal{W}\times\mathcal{R}$.
The linear form $f_b$ will be assumed to be continuous as well, i.e., there exists a
constant $c_b>0$ such that for all
$(\mbs{t},\mbs{\eta})\in\mathcal{W}\times\mathcal{R}$
\begin{equation}
  f_b(\mbs{t},\mbs{\eta})
  \le
  c_b
  \normbeam{(\mbs{t},\mbs{\eta})}\ .
  \label{eq-beam-fbound}
\end{equation}

\section{Joint formulation of  solids and beams}
\label{sec-joint}

We consider now the formulation of a problem in which a beam and a three-dimensional solid,
connected at some interface, deform to reach equilibrium under the action of external
forces. Two issues need to be discussed. First, the minimal \emph{compatibility conditions} that can
be used to link the kinematics of the beam and the solid on their shared interface.
Second, the \emph{stability}  and \emph{well-posedness} of the global problem under
the smallest set of Dirichlet boundary conditions.

The first issue will be addressed in this section, and follows our previous work
\cite{Romero:2018iu}. The second issue in studied in Section~\ref{sec-analysis}. To analyse both of
them, we consider the simplest case, an elastic solid as the one described in
Section~\ref{subs-solid}, devoid of Dirichlet boundary conditions, attached through a surface
$\Sigma$ to the tip of a cantilever beam, of the type defined in Section~\ref{subs-beam}.  The
number of Dirichlet boundary conditions for the \emph{global} problem is thus six, and it remains to
be proven that, when the right links are employed, the former suffice to ensure the stability of the
problem. Other, apparently more complex situations (with more beams or solids), are essentially
equivalent to this one.

\subsection{Link formulation}
\label{subs-link}
We define next two constraints relating the displacement and rotation vector of the beam at the
free end, denoted respectively as $\mbs{w}_*$ and $\mbs{\theta}_*$, with the displacement field
$\mbs{u}$ of the body on the connected surface~$\Sigma$.
More precisely, the first constraint imposes that
the tip displacement is equal to 
the average displacement of the body on $\Sigma$, that is
\begin{equation}
  \mbs{w}_* = \frac{1}{|\Sigma|} \int_\Sigma \mbs{u}\,\mathrm{d} A\ .
  \label{eq-constraint-w}
\end{equation}
The second constraint imposes that the rotation at the
tip of the beam, indicated as $\mbs{\theta}_*$, is identical
to the average surface rotation on $\Sigma$. To express it,
consider curvilinear coordinates $(\xi^1,\xi^2)$ on $\Sigma$
with vectors $\mbs{T}_\alpha, \alpha=1,2$ tangent to
the coordinate lines. Following \cite{Romero:2018iu},
the sought constraint can be expressed as
\begin{equation}
  \mbs{\theta}_*
  =
  \mbs{J}^{-1}
  \int_{\Sigma} \mbs{T}^\alpha \times \mbs{u}_{,\alpha}
  \,\mathrm{d} A ,
  \label{eq-constraint-theta}
\end{equation}
where the tensor $\mbs{J}$ is given by
\begin{equation}
  \mbs{J}
  :=
  \int_{\Sigma} (2\mbs{I} - \mbs{T}^{\alpha}\otimes \mbs{T}_\alpha)
  \,\mathrm{d} A
  \ ,
  \label{eq-inertia}
\end{equation}
the convention of sum over repeated indices is employed,
with $\alpha$ running from~1 to~2, and $\{\mbs{T}^\alpha\}_{\alpha=1}^2$
being the dual basis of the curvilinear coordinates.
See Appendix~\ref{app} for its derivation.

\subsection{Global problem statement}
In this joint problem, the equilibrium of the structure consisting
of the clamped beam, the deformable body and the connecting link
is obtained from the stationarity condition of a Lagrangian.
To define the latter, consider first the space of Lagrange multipliers 
\begin{equation}
  \mathcal{Q}
  \defined
  \mathbb{R}^3 \times\mathbb{R}^3
   \label{eq-space-lagmultipliers}
\end{equation}
with norm
\begin{equation}
  \normq{(\mbs{\lambda},\mbs{\mu})}
  \defined
  \left(
    \frac{1}{L^2}
    \norm{\mbs{\lambda}}^2_2
  +
  \norm{\mbs{\mu}}^2_2
  \right)^{1/2}
\ .
   \label{eq-norms-lagrange}
\end{equation}

Since the global problem involves two types of bodies, we start by defining 
one last product space $\mathcal{V} :=
\mathcal{U}\times\mathcal{W}\times\mathcal{R}$
with norm
\begin{equation}
  \normv{(\mbs{u}, \mbs{w}, \mbs{\theta})}
  \defined
  \left(
    \norm{\mbs{u}}^2_{\mathcal{U}}
    +
    \norm{\mbs{w}}^2_{\mathcal{W}}
    +
    L^2
    \norm{\mbs{\theta}}^2_{\mathcal{R}}
    \right)^{1/2},
   \label{eq-v-norm}
\end{equation}
for all $(\mbs{u},\mbs{w},\mbs{\theta})\in\mathcal{V}$. On this space,
we can define the bilinear form
$a(\cdot,\cdot):\mathcal{V}\times\mathcal{V}\to\mathbb{R}$
and the linear form $f:\mathcal{V}\to\mathbb{R}$ by
\begin{equation}
  \begin{aligned}
    a(\mbs{u},\mbs{w},\mbs{\theta}; \mbs{v}, \mbs{t}, \mbs{\eta})
    &\defined
    a_{\mathcal{B}}(\mbs{u},\mbs{v})
    +
    a_b(\mbs{w},\mbs{\theta}; \mbs{t}, \mbs{\eta})
    \ , \\
    f(\mbs{v},\mbs{t},\mbs{\eta})
    &\defined
    f_{\mathcal{B}}(\mbs{v})
    +
    f_b(\mbs{t},\mbs{\eta})\ .
  \end{aligned}
   \label{eq-bilinears}
\end{equation}

The joint equilibrium of the solid and beam will be obtained
as the saddle point of the Lagrangian
$L:\mathcal{V}\times\mathcal{Q}$
defined as
\begin{equation}
  \begin{aligned}
  L(\mbs{u},\mbs{w},\mbs{\theta},\mbs{\lambda},\mbs{\mu})
  \defined&
  \frac{1}{2} a(\mbs{u}, \mbs{w}, \mbs{\theta}; \mbs{u},\mbs{w}, \mbs{\theta})
  - f(\mbs{u}, \mbs{w}, \mbs{\theta})
  \\
  &+
  \langle
  \mbs{\lambda},
  \mbs{T}^{\alpha}\times  \mbs{u}_{,\alpha} -
  \mbs{J} \mbs{\theta}_*
  \rangle_\Sigma
  +
  \langle \mbs{\mu} , \mbs{u} - \mbs{w}_* \rangle_\Sigma .
  \end{aligned}
   \label{eq-lagrangian}
\end{equation}
where the notation $\langle\cdot,\cdot\rangle_\Sigma$ denotes the  $L_2$ product on
the surface $\Sigma$. The optimality conditions of the Lagrangian give the mixed
variational problem: find $(\mbs{u},\mbs{w},\mbs{\theta},\mbs{\lambda},\mbs{\mu}) \in
\mathcal{V}\times \mathcal{Q}$ such that
\begin{equation}
  \begin{aligned}
    a(\mbs{u},\mbs{w},\mbs{\theta}; \mbs{v}, \mbs{t}, \mbs{\beta})
    +
    b( \mbs{\lambda}, \mbs{\mu}; \mbs{v}, \mbs{t}, \mbs{\beta})
    &= f( \mbs{v}, \mbs{t}, \mbs{\beta}) ,
    \\
    b( \mbs{\gamma}, \mbs{\nu}; \mbs{u},\mbs{w},\mbs{\theta})
    &= 0, 
    \\
  \end{aligned}
   \label{eq-mixed}
\end{equation}
for all $(\mbs{v}, \mbs{t}, \mbs{\beta}, \mbs{\gamma}, \mbs{\nu})$
in $\mathcal{V}\times\mathcal{Q}$, with 
\begin{equation}
  \begin{aligned}
    b( \mbs{\gamma}, \mbs{\nu}; \mbs{u},\mbs{w},\mbs{\theta})
    &=
    \langle \mbs{\gamma},
    \mbs{T}^{\alpha}\times \mbs{u}_{,\alpha} - \mbs{J}\mbs{\theta}_*
    \rangle_\Sigma
    +
    \langle \mbs{\nu} , \mbs{u} - \mbs{w}_* \rangle_\Sigma\ .
  \end{aligned}
   \label{eq-forms}
\end{equation}
The solvability of problem~\eqref{eq-mixed} requires
the careful consideration of the properties of
both bilinear forms $a(\cdot,\cdot)$ and $b(\cdot,\cdot)$,
as well as the spaces on which they are defined.

\section{Analysis}
\label{sec-analysis}
Mixed variational problems such as the one described in Eqs.~\eqref{eq-mixed}
have been extensively studied in the literature \cite{Brezzi:1991tn,Roberts:1989vm}.
Their well-posedness pivots on two conditions: the ellipticity of
the bilinear form $a(\cdot,\cdot)$ on a certain set $\mathcal{K}\subset\mathcal{V}$
defined below, and the inf-sup condition of the bilinear form $b(\cdot,\cdot)$. 

Before stating the main result we note that, based on Eqs.~\eqref{solid-bounds}
and \eqref{eq-beam-bounds}, the global bilinear
form $a(\cdot,\cdot)$ verifies the following bounds
\begin{equation}
  \begin{aligned}
    |a(\mbs{u},\mbs{w},\mbs{\theta}; \mbs{v},\mbs{t},\mbs{\eta})|
    &\le 
    C \normv{(\mbs{u},\mbs{w},\mbs{\theta})}
    \normv{(\mbs{v},\mbs{t},\mbs{\eta})}\ , \\
    a(\mbs{u},\mbs{w},\mbs{\theta}; \mbs{u},\mbs{w},\mbs{\theta})
    +
    \norm{\mbs{u}}^2_{[L^2(\mathcal{B})]^3}
    &\ge
    \alpha
    \normv{(\mbs{u},\mbs{w},\mbs{\theta})}^2\ ,
    \\
  \end{aligned}
  \label{eq-compact}
\end{equation}
for some $C,\alpha>0$
and all $(\mbs{u},\mbs{w},\mbs{\theta}), (\mbs{v},\mbs{t},\mbs{\eta})\in\mathcal{V}$.
Likewise, and due to Eqs.~\eqref{eq-solid-fbound} and~\eqref{eq-beam-fbound}
the global linear form $f(\cdot)$ is continuous, that is,
\begin{equation}
   f( \mbs{v},\mbs{t},\mbs{\eta} )
    \le
    c \normv{ (\mbs{v},\mbs{t},\mbs{\eta})}\ .
  \label{eq-global-f-bound}
\end{equation}
for some $c>0$ and all $(\mbs{v},\mbs{t},\mbs{\eta})\in\mathcal{V}$.
We note, again, that the bilinear form $a(\cdot;\cdot)$ is not
coercive in $\mathcal{V}$, as a result of the lack of coercivity
of the bilinear form in the problem of the deformable solid.

The set $\mathcal{K}\subset\mathcal{V}$ consists of all
the functions where the bilinear form
$b(\cdot,\cdot)$ vanishes, i.e.,
\begin{equation}
  \mathcal{K}
  =
  \set{
    (\mbs{u},\mbs{w},\mbs{\theta})\in
    \mathcal{V}, 
    \
    b(\mbs{\gamma}, \mbs{\nu}; \mbs{u}, \mbs{w}, \mbs{\theta}) = 0
    \quad \textrm{for all}\ (\mbs{\gamma},\mbs{\nu})
    \in \mathcal{Q}
  }.
   \label{eq-kernel}
\end{equation}
From the definition of the bilinear form $b(\cdot;\cdot)$
it follows that the elements in $\mathcal{K}$ are 
ones that satisfy the constraints
\eqref{eq-constraint-w} and~\eqref{eq-constraint-theta}.

The well-posedness of the saddle point problem is the result of two theorems
that we state and prove next.

\begin{theorem}
  \label{thm-coercivity}
  The bilinear form $a(\cdot;\cdot)$ is $\mathcal{V}$-elliptic on $\mathcal{K}$.
\end{theorem}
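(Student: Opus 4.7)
Since $a$ splits as $a_{\mathcal{B}}+a_b$, and $a_b$ is already coercive on $\mathcal{W}\times\mathcal{R}$ by \eqref{eq-beam-bounds}, the whole difficulty lies in showing that, once restricted to $\mathcal{K}$, the lack of coercivity of $a_{\mathcal{B}}$ is compensated by the link constraints: these tie the rigid-motion content of $\mbs{u}$ on $\Sigma$ to $(\mbs{w}_*,\mbs{\theta}_*)$, which is in turn controlled by the clamped beam. I would formalise this via a standard Peetre--Tartar compactness/contradiction argument, using the Korn-type estimate \eqref{eq-solid-bounds-2} as the nonstandard ingredient that handles the solid.

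\textbf{Execution.} First, assume the conclusion fails. Then there is a sequence $(\mbs{u}_n,\mbs{w}_n,\mbs{\theta}_n)\in\mathcal{K}$ with $\normv{(\mbs{u}_n,\mbs{w}_n,\mbs{\theta}_n)}=1$ and $a(\mbs{u}_n,\mbs{w}_n,\mbs{\theta}_n;\mbs{u}_n,\mbs{w}_n,\mbs{\theta}_n)\to 0$. Splitting $a$ and using the beam coercivity immediately gives $\mbs{w}_n\to\mbs 0$ in $\mathcal{W}$ and $\mbs{\theta}_n\to\mbs 0$ in $\mathcal{R}$; in particular, by the continuity of the trace at $s=L$, the tip values $\mbs{w}_{n*}$ and $\mbs{\theta}_{n*}$ vanish in the limit. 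The unit-norm condition therefore forces $\normu{\mbs{u}_n}\to 1$, while $a_{\mathcal{B}}(\mbs{u}_n,\mbs{u}_n)\to 0$ implies $\nabla^s\mbs{u}_n\to\mbs 0$ in $[L^2(\mathcal{B})]^{3\times 3}$.

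Second, extract a subsequence with $\mbs{u}_n\rightharpoonup\mbs{u}^*$ in $\mathcal{U}$. The compact embedding $\mathcal{U}\hookrightarrow [L^2(\mathcal{B})]^3$ upgrades this to strong $L^2$ convergence, and then \eqref{eq-solid-bounds-2} applied to differences $\mbs{u}_n-\mbs{u}_m$ shows that $\{\mbs{u}_n\}$ is Cauchy in $\mathcal{U}$, so $\mbs{u}_n\to\mbs{u}^*$ strongly in $\mathcal{U}$ with $\normu{\mbs{u}^*}=1$ and $\nabla^s\mbs{u}^*=\mbs 0$. The second identity characterises $\mbs{u}^*$ as an infinitesimal rigid motion, $\mbs{u}^*(\mbs{x})=\mbs{a}+\mbs{b}\times\mbs{x}$, for some $\mbs{a},\mbs{b}\in\mathbb{R}^3$ not both zero.

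Third, pass to the limit in the constraints. The maps $\mbs{u}\mapsto\int_\Sigma\mbs{u}\,\mathrm{d}A$ and $\mbs{u}\mapsto\int_\Sigma\mbs{T}^\alpha\times\mbs{u}_{,\alpha}\,\mathrm{d}A$ are continuous linear functionals on $\mathcal{U}$ by the trace theorem, so $(\mbs{u}_n,\mbs{w}_n,\mbs{\theta}_n)\in\mathcal{K}$ together with $\mbs{w}_{n*},\mbs{\theta}_{n*}\to\mbs 0$ yields
\begin{equation*}
\int_\Sigma \mbs{u}^*\,\mathrm{d}A=\mbs 0,
\qquad
\int_\Sigma \mbs{T}^\alpha\times\mbs{u}^*_{,\alpha}\,\mathrm{d}A=\mbs 0 .
\end{equation*}
For the rigid motion $\mbs{u}^*$ we have $\mbs{u}^*_{,\alpha}=\mbs{b}\times\mbs{T}_\alpha$, and the triple product identity together with $\mbs{T}^\alpha\cdot\mbs{T}_\alpha=2$ gives $\mbs{T}^\alpha\times(\mbs{b}\times\mbs{T}_\alpha)=(2\mbs I-\mbs{T}^\alpha\otimes\mbs{T}_\alpha)\mbs{b}$. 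Integrating yields $\mbs{J}\mbs{b}=\mbs 0$, and since $\mbs{J}$ is symmetric positive definite (its eigenvalues come from $2\mbs I$ minus the rank-two tangential projector), $\mbs{b}=\mbs 0$. The first constraint then reduces to $\mbs{a}=\mbs 0$, whence $\mbs{u}^*=\mbs 0$, contradicting $\normu{\mbs{u}^*}=1$. The main obstacle is the algebraic step reducing the rotation constraint to the invertibility of $\mbs{J}$; this hinges on the identification done in Appendix~\ref{app}, and without it the coupling between the solid rigid modes and the beam tip would not close.
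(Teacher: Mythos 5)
Your proof is correct and follows essentially the same route as the paper: a compactness/contradiction argument for a unit-norm minimizing sequence, using Rellich's theorem and the Korn-type bound \eqref{eq-solid-bounds-2} to obtain strong convergence to a limit that must be an infinitesimal rigid motion annihilated by the constraints. The only difference is organizational — the paper first proves positive definiteness of $a$ on $\mathcal{K}$ and then invokes it for the limit, whereas you analyze the limit directly — and you in fact supply a detail the paper merely asserts, namely the explicit reduction of the rotational constraint to $\mbs{J}\mbs{b}=\mbs{0}$ and the positive definiteness of $\mbs{J}$.
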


\begin{proof}
Let the function $\norma{\cdot}:\mathcal{V}\to\mathbb{R}$ be defined as
\begin{equation}
  \norma{(\mbs{u},\mbs{w}, \mbs{\theta})}
  =
  a(\mbs{u},\mbs{w}, \mbs{\theta}; \mbs{u},\mbs{w}, \mbs{\theta})\ ,
  \label{eq-norma}
\end{equation}
for all $(\mbs{u},\mbs{w}, \mbs{\theta})\in\mathcal{V}$.  We prove first that this
function is positive definite on~$\mathcal{K}$. For $(\mbs{u},\mbs{w},\mbs{\theta})\in\mathcal{K}$,
$\norma{(\mbs{u},\mbs{w},\mbs{\theta})}=0$ if and only if
\begin{equation*}
  0
  =
  a_{\mathcal{B}}(\mbs{u},\mbs{u}) + a_b(\mbs{w},\mbs{\theta}; \mbs{w},\mbs{\theta})
  \ .
\end{equation*}
The bilinear forms $a_{\mathcal{B}}(\cdot,\cdot)$ and $a_b(\cdot,\cdot)$ are 
positive semidefinite and positive definite, respectively. Hence,
$(\mbs{w},\mbs{\theta})$ must be equal to $(\mbs{0},\mbs{0})$ and
$\mbs{u}$ must be an infinitesimal
rigid body motion. The only rigid body deformation in $\mathcal{K}$ is
\begin{equation*}
  \mbs{u} = \mbs{w}_* + \mbs{\theta}_*\times(\mbs{x}-\mbs{x}_G),
\end{equation*}
with $\mbs{x}_G$ the position of the center of area of $\Sigma$. But,
since $\mbs{w}\equiv\mbs{0}$ and $\mbs{\theta}\equiv\mbs{0}$, the function $\mbs{u}$
must also be identically zero.

To prove next that $\norma{(\mbs{u},\mbs{w},\mbs{\theta})} \ge \alpha 
\normv{(\mbs{u},\mbs{w},\mbs{\theta})}$ for some constant $\alpha>0$,
and any $(\mbs{u},\mbs{w},\mbs{\theta})\in\mathcal{K}$, suppose that
it is not true. Then there is a sequence $\{(\mbs{u}_i,\mbs{w}_i,\mbs{\theta}_i)\}
\in\mathcal{K}$ with
\begin{equation*}
  \normv{(\mbs{u}_i,\mbs{w}_i,\mbs{\theta}_i)} = 1,
  \qquad
  \hbox{and}
  \qquad
  \lim_{i\to\infty} \norma{(\mbs{u}_i,\mbs{w}_i,\mbs{\theta}_i)} = 0\ .
\end{equation*}
Since $1= \normv{(\mbs{u}_i,\mbs{w}_i,\mbs{\theta}_i)} \ge
\norm{\mbs{u}_i}_{[H^1(\mathcal{B})]^3}$, the sequence $\{\mbs{u}_i\}$ is
bounded in $[H^1(\mathcal{B})]^3$ and, by Rellich's theorem, there
is a subsequence $\{ \mbs{u}_{i_j}\}$ that 
converges in $[L^2(\mathcal{B})]^3$ to a function $\bar{\mbs{u}}$.
But, noting that
$ \lim_{j\to\infty} \norma{(\mbs{u}_{i_j},\mbs{w}_{i_j},\mbs{\theta}_{i_j})} = 0$,
this must be a Cauchy sequence in the norm
\begin{equation*}
  (\mbs{u},\mbs{w},\mbs{\theta})\mapsto
  \left(\norm{\mbs{u}}^2_{[L^2(\mathcal{B})]^3} + \norma{(\mbs{u},\mbs{w},\mbs{\theta})}^2\right)^{1/2}.
\end{equation*}
But this norm is equivalent to $\normv{\cdot}$ due to Korn's second inequality
and the ellipticity of $a_b(\cdot,\cdot)$. Hence, the sequence is Cauchy with respect
to $\normv{\cdot}$ and since $\mathcal{V}$ is a Hilbert space, 
it converges to $(\bar{\mbs{u}},\bar{\mbs{w}}, \bar{\mbs{\theta}})\in\mathcal{K}$.
The two norms being equivalent proves that
\begin{equation*}
  0 =
  \lim_{j\to\infty} \norma{(\mbs{u}_{i_j},\mbs{w}_{i_j},\mbs{\theta}_{i_j})}
  = 
  \norma{(\bar{\mbs{u}},\bar{\mbs{w}}, \bar{\mbs{\theta}})}.
\end{equation*}
Above we showed that $\norma{\cdot}$ is positive definite in $\mathcal{K}$,
hence $(\bar{\mbs{u}},\bar{\mbs{w}}, \bar{\mbs{\theta}})=(\mbs{0},\mbs{0},\mbs{0})$
but 
\begin{equation*}
  0
  =
  \norm{(\bar{\mbs{u}},\bar{\mbs{w}}, \bar{\mbs{\theta}})}
  =
  \lim_{j\to\infty} \norma{(\mbs{u}_{i_j},\mbs{w}_{i_j},\mbs{\theta}_{i_j})}
  = 1\ .
\end{equation*}
Since this is impossible, we conclude that there exists $\alpha>0$ such
that $\norma{(\mbs{u},\mbs{w},\mbs{\theta})} \ge \alpha 
\normv{(\mbs{u},\mbs{w},\mbs{\theta})}$.\qed
\end{proof}

The second condition required to guarantee the well-posedness of the mixed
problem is the \emph{inf-sup} condition on the bilinear form $b(\cdot,\cdot)$.

\begin{theorem}
  \label{thm-inf-sup}
There exists a constant $\beta>0$ such that
for all $(\mbs{\lambda},\mbs{\mu})\in\mathcal{Q}$,
\begin{equation}
  \sup_{(\mbs{u}, \mbs{w}, \mbs{\theta})\in\mathcal{V}} 
  \frac{b(\mbs{\lambda}, \mbs{\mu}; \mbs{u}, \mbs{w}, \mbs{\theta})}{
    \normv{(\mbs{u},\mbs{w},\mbs{\theta})}}
  \ge 
  \beta
  \normq{(\mbs{\lambda},\mbs{\mu})} .
   \label{eq-inf-sup}
\end{equation}
\end{theorem}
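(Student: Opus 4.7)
My plan is to verify the inf-sup condition constructively by exhibiting an explicit test triple in $\mathcal{V}$ whose pairing with $b$ reproduces $\normq{(\mbs{\lambda},\mbs{\mu})}^2$ and whose $\mathcal{V}$-norm is controlled by $\normq{(\mbs{\lambda},\mbs{\mu})}$. The crucial simplification is that $\mbs{\lambda}$ and $\mbs{\mu}$ are \emph{constant} vectors in $\mathbb{R}^3$, so they pull out of the $L^2(\Sigma)$ pairings in~\eqref{eq-forms} and $b$ collapses to
\begin{equation*}
  b(\mbs{\lambda},\mbs{\mu};\mbs{u},\mbs{w},\mbs{\theta})
  =
  \mbs{\lambda}\cdot\!\int_\Sigma \mbs{T}^\alpha\times\mbs{u}_{,\alpha}\,\mathrm{d}A
  - |\Sigma|\,\mbs{\lambda}\cdot \mbs{J}\mbs{\theta}_*
  + \mbs{\mu}\cdot\!\int_\Sigma \mbs{u}\,\mathrm{d}A
  - |\Sigma|\,\mbs{\mu}\cdot \mbs{w}_*.
\end{equation*}
Setting $\mbs{u}\equiv\mbs{0}$ in $\mathcal{U}$ eliminates the solid terms and leaves an expression depending only on the beam tip values.

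With this choice, I would then use the linear interpolants
\begin{equation*}
  \mbs{w}(s) = -\frac{s}{L|\Sigma|}\,\mbs{\mu},
  \qquad
  \mbs{\theta}(s) = -\frac{s}{L^3|\Sigma|}\,\mbs{J}^{-1}\mbs{\lambda},
\end{equation*}
which vanish at $s=0$ and therefore lie in $[H^1_0(0,L)]^3$. Substituting their tip values into the reduced expression for $b$ gives $b = |\mbs{\mu}|^2 + L^{-2}|\mbs{\lambda}|^2 = \normq{(\mbs{\lambda},\mbs{\mu})}^2$ exactly. A direct calculation of $\normv{(\mbs{0},\mbs{w},\mbs{\theta})}^2$ from~\eqref{eq-norms} and~\eqref{eq-v-norm} yields two terms proportional to $(L/|\Sigma|^2)|\mbs{\mu}|^2$ and $(L^{-1}|\Sigma|^{-2})\|\mbs{J}^{-1}\mbs{\lambda}\|^2$; collecting them and using $\|\mbs{J}^{-1}\mbs{\lambda}\|\le\|\mbs{J}^{-1}\|\,\|\mbs{\lambda}\|$ gives $\normv{(\mbs{0},\mbs{w},\mbs{\theta})}^2 \le C\,\normq{(\mbs{\lambda},\mbs{\mu})}^2$ for a constant $C$ depending only on $L$, $|\Sigma|$ and $\|\mbs{J}^{-1}\|$. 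Dividing $b$ by this bound produces the desired inequality with $\beta = 1/\sqrt{C}$.

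The only nontrivial ingredient, and essentially the one point where care is needed, is the invertibility of the tensor $\mbs{J}$ defined in~\eqref{eq-inertia}. Pointwise on $\Sigma$, $\mbs{T}^\alpha\otimes\mbs{T}_\alpha$ is the orthogonal projector onto the tangent plane, so $2\mbs{I}-\mbs{T}^\alpha\otimes\mbs{T}_\alpha$ is symmetric with eigenvalues $\{1,1,2\}$ and therefore uniformly positive definite; these two properties are preserved by surface integration, so $\mbs{J}$ is symmetric positive definite and $\|\mbs{J}^{-1}\|$ is finite. Once this is in hand, the remainder of the argument is just bookkeeping of powers of $L$ and $|\Sigma|$, chosen precisely so that the anisotropic scaling in the definition~\eqref{eq-norms-lagrange} of $\normq{\cdot}$ matches the norms of the prescribed test functions. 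A more abstract alternative would be to observe that $\mathcal{Q}=\mathbb{R}^6$ is finite-dimensional, so injectivity of $(\mbs{\lambda},\mbs{\mu})\mapsto b(\mbs{\lambda},\mbs{\mu};\cdot,\cdot,\cdot)$ would suffice by equivalence of norms, but the explicit construction above is more informative and yields an essentially closed-form inf-sup constant.
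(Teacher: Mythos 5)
Your proof is correct, but it reaches the inf-sup bound by a genuinely different test function than the paper. The paper sets the \emph{beam} fields to zero and probes with a rigid-body displacement of the \emph{solid}, $\mbs{u}=\mbs{\mu}+\mbs{\lambda}\times(\mbs{x}-\mbs{x}_G)$, so that the numerator becomes $\mbs{\lambda}\cdot\mbs{J}\mbs{\lambda}+|\Sigma|\,|\mbs{\mu}|^2$ and the denominator is controlled through the inertia tensor $\mbs{M}$ of $\mathcal{B}$; that route needs the positive definiteness of $\mbs{J}$ to bound the numerator from below and the vanishing of the cross term $\langle\mbs{\mu},\mbs{\lambda}\times(\mbs{x}-\mbs{x}_G)\rangle_\Sigma$ (which holds because $\mbs{x}_G$ is the area centroid of $\Sigma$). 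You do the mirror image: you set the \emph{solid} displacement to zero and probe with linear beam fields whose tip values are tuned to $\mbs{\mu}$ and $\mbs{J}^{-1}\mbs{\lambda}$, which makes the numerator exactly $\normq{(\mbs{\lambda},\mbs{\mu})}^2$ and reduces the whole argument to bounding $\normbeam{(\mbs{w},\mbs{\theta})}$, i.e.\ to the invertibility of $\mbs{J}$ — which you actually prove (via $\mbs{J}=\int_\Sigma(\mbs{I}+\mbs{n}\otimes\mbs{n})\,\mathrm{d}A\ge|\Sigma|\,\mbs{I}$), something the paper uses implicitly but never establishes. Your version avoids the centroid cancellation and the solid-side inertia entirely, and yields an explicit $\beta$ in terms of $L$, $|\Sigma|$ and $\norm{\mbs{J}^{-1}}$; the paper's version has the conceptual appeal of exhibiting the Lagrange multipliers as dual to rigid-body motions of the solid, which is the mechanically meaningful pairing. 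Both are valid surjectivity arguments for the same finite-dimensional constraint operator, and your closing remark that injectivity plus $\dim\mathcal{Q}<\infty$ would already suffice is also correct.
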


\begin{proof}
  To prove this bound, we choose $\mathcal{V}\ni(\mbs{u},\mbs{w},\mbs{\theta}) =
  (\mbs{\mu}+ \mbs{\lambda}\times(\mbs{x}-\mbs{x}_G), \mbs{0},\mbs{0})$.
  Thus,
\begin{equation}
  \begin{split}
    \sup_{(\mbs{u}, \mbs{w}, \mbs{\theta})\in\mathcal{V}} 
    \frac{b(\mbs{\lambda}, \mbs{\mu}; \mbs{u}, \mbs{w}, \mbs{\theta})}{
    \normv{(\mbs{u},\mbs{w},\mbs{\theta})}}
  &\ge 
   \frac{b(\mbs{\lambda}, \mbs{\mu}; \mbs{\mu}+
     \mbs{\lambda}\times(\mbs{x}-\mbs{x}_G), 
     \mbs{0}, \mbs{0})}{
    \normv{(\mbs{u},\mbs{w},\mbs{\theta})}}
  \\
  &=
  \frac{
    \langle 
    \mbs{\lambda} ,
    \mbs{T}^\alpha\times(\mbs{\lambda}\times \mbs{T}_\alpha)
    \rangle_{\Sigma}
    +
    \langle
    \mbs{\mu},
    \mbs{\mu}+
     \mbs{\lambda}\times(\mbs{x}-\mbs{x}_G)
     \rangle}{
     \normu{\mbs{\mu}+\mbs{\lambda}\times(\mbs{x}-\mbs{x}_G)}}
  \\
  &=
  \frac{
    \mbs{\lambda}\cdot \mbs{J}\mbs{\lambda}
    +
    |\mbs{\mu}|^2 |\Sigma|}
  {\left( |\mbs{\mu}|^2 |\mathcal{B}| + \mbs{\lambda}\cdot \mbs{M} \mbs{\lambda}\right)^{1/2}}
  \\
  &\ge
  \beta \normq{(\mbs{\lambda},\mbs{\mu})},
  \end{split}
   \label{eq-proof-infsup}
\end{equation}  
with the inertia
\begin{equation}
  \mbs{M}
  \defined
  \int_V
  \left(
    |\mbs{x}- \mbs{x}_G|^2 \mbs{I} 
    - (\mbs{x}-\mbs{x}_G)\otimes (\mbs{x}-\mbs{x}_G)
  \right)
  \,\mathrm{d} V
  \label{eq-inertia2}
\end{equation}
where we have employed the boundedness of $\mathcal{B}$ and~$\Sigma$.
\qed
\end{proof}

Theorems~\ref{thm-coercivity} and~\ref{thm-inf-sup} are
necessary and sufficient conditions for the well-posedness of
problem~\eqref{eq-mixed}.

\section{Summary}
\label{sec-summary}
We have presented the small strain form of a variational principle
that governs the collective equilibria of linked beams and deformable
solids. This is a remarkable principle in that it combines the
mechanical response of two types of bodies with very different
kinematic descriptions.

The variational principle rests on two compatibility conditions
that link, in the weakest possible way, the kinematics of beams
and solids on their common interface. While the compatibility
of translations is fairly straightforward, the compatibility of
beam rotations and displacements of the solid's surface is new
and based on a recent work of the author \cite{Romero:2018iu}.

The optimality conditions of this variational principle give
rise to a saddle point problem whose well-posedness is proven.
In addition to the mathematical consequences of such a result,
it evinces that it can be the basis of convergent numerical
discretizations for structural models combining beams and
deformable solids.

We close by noting that the well-posedness of the problem
does not rely on the elastic response of either the solid
or the beam. Rather, only some (weak) coercivity conditions
of the bilinear forms of the solid and the beam are required for the proof.
Hence, the result obtained can be, in principle, extended
to inelastic structures in which the same
stability estimates hold, even if just incrementally.

\appendix
\section{Derivation of the rotational constraint}
\label{app}
We derive next an intrinsic form of the constraint that
links the rotation vector at the tip of the beam, denoted as $\mbs{\theta}_*$,
with the \emph{average} rotation of the surface $\Sigma\subset\partial\mathcal{B}$.

Following \cite{Romero:2018iu}, we consider first the large strain
case. For that, we define the \emph{surface} deformation gradient.
Given a solid with reference configuration $\mathcal{B}_0$ and
a surface $\Sigma\subsetneq\partial\mathcal{B}_0$ with
curvilinear coordinates $(\xi^1,\xi^2)$,
and tangent vectors $\mbs{T}_\alpha,\alpha=1,2$, 
the surface deformation gradient is the map
\begin{equation}
  \mbs{f}
  \defined
  \pd{\varphi^i}{\xi^\alpha} \mbs{e}_i\otimes \mbs{T}^\alpha\ ,
  \label{eq-app-f}
\end{equation}
where $\mbs{\varphi}:\mathcal{B}_0\to\mathbb{R}^3$ is the deformation
of the solid, $\{\mbs{e}_i\}_{i=1}^3$ is a basis of $\mathbb{R}^3$,
and $\mbs{T}^\alpha,$ with $\alpha=1,2$ is the dual coordinate basis on the
reference surface~$\Sigma$.

Since the surface deformation gradient has a unique polar decomposition~\cite{RomArr09}
$\mbs{f}=\mbs{R}\mbs{U}$, with $\mbs{R}\in SO(3)$ and $\mbs{U}$ a rank-two symmetric tensor, 
the rotation $\mbs{\Lambda}_*\in SO(3)$ at the tip of the beam is equal to the
average rotation of the surface deformation gradient $\mbs{f}$ if
and only if
\begin{equation}
  \mbs{0} =
  \frac{1}{|\Sigma|}
  \int_{\Sigma} \mathrm{skew}[ \mbs{\Lambda}_*^T \mbs{f}] \,\mathrm{d} A\ .
  \label{eq-app-def}
\end{equation}
To study the form of this constraint in the small strain regime, we linearize the
integrand of Eq.~\eqref{eq-app-def}. Using $\epsilon $ as a small
parameter, we can introduce the expansion
\begin{equation}
  \mbs{\Lambda}_*^T \mbs{f}
  =
  ( \mbs{I} + \epsilon \hat{\mbs{\theta}}_*)^T
  ( \mbs{I}_{\Sigma} + \epsilon \nabla \mbs{u}) + \bigO(\epsilon^2)
  =
  \mbs{I}_{\Sigma} + \epsilon \nabla \mbs{u} - \epsilon \hat{\mbs{\theta}}_*
  \mbs{I}_{\Sigma} + \bigO(\epsilon^2),
  \label{eq-expansion}
\end{equation}
with
\begin{equation}
  \mbs{I}_\Sigma \defined \delta^\beta_\alpha \mbs{T}_\beta\otimes \mbs{T}^\alpha,
  \qquad
  \nabla \mbs{u} \defined \pd{u^i}{\xi^\alpha} \mbs{e}_i\otimes \mbs{T}^\alpha\ ,
\end{equation}
and $\delta^\beta_\alpha$ being the Kronecker's delta. The tensor $\mbs{I}_\Sigma$ is the identity
tensor of tangent vectors to $\Sigma$. Combining Eqs.~\eqref{eq-app-def}
and~\eqref{eq-expansion}, the linearized rotational constraint is
\begin{equation}
  \int_{\Sigma} \mathrm{skew}[\nabla \mbs{u}]
  \,\mathrm{d} A
  =
  \int_\Sigma
  \mathrm{skew}[ \hat{\mbs{\theta}}_* \mbs{I}_{\Sigma} ]
  \,\mathrm{d} A\ ,
\end{equation}
or equivalently,
\begin{equation}
  \int_\Sigma \mbs{T}^\alpha \times
  \left(
    \pd{u^i}{\xi^\alpha} \mbs{e}_i
  \right)
  \,\mathrm{d} A
  =
  \int_\Sigma \mbs{T}^\alpha\times (\mbs{\theta}_* \times \mbs{T}_\alpha)
  \,\mathrm{d} A\ .
  \label{eq-app-pr}
\end{equation}
By defining the section tensor $\mbs{J}$ as in Eq.~\eqref{eq-inertia},
Eq.~\eqref{eq-app-pr} can be rewritten as
\begin{equation}
  \mbs{\theta}_*
  =
  \mbs{J}^{-1}
  \int_\Sigma \mbs{T}^\alpha \times
  \left(
    \pd{u^i}{\xi^\alpha} \mbs{e}_i
  \right)
  \,\mathrm{d} A 
  \label{eq-app-final}
\end{equation}

This constraint links the rotation vector $\mbs{\theta}_*$ with
a certain average rotation of a general surface $\Sigma$.
When this surface is plane, as required to represent the cross
section of a beam, we can select the curvilinear coordinates
with a constant tangent basis so that Eq.~\eqref{eq-app-final}
can be written in the more compact way:
\begin{equation}
  \mbs{\theta}_*
  =
  \mbs{J}^{-1}
  \int_\Sigma \mbs{T}^\alpha \times
  \mbs{u}_{,\alpha}
  \,\mathrm{d} A .
  \label{eq-app-final2}
\end{equation}

% ---
%\bibliographystyle{unsrt}
%\bibliography{irobib}

\end{document} 
%%% Local Variables:
%%% mode: latex
%%% TeX-master: t
%%% End: